\newcommand{\Mod}[1]{\ (\mathrm{mod}\ #1)}
\newtheorem{theorem}{Theorem}[section]
\newtheorem{lemma}[theorem]{Lemma}
\newtheorem{corollary}[theorem]{Corollary}
\theoremstyle{definition}
\newtheorem{definition}[theorem]{Definition}
\theoremstyle{remark}
\numberwithin{equation}{section}
\begin{document}
\title[$MEX$ related  integer partitions of Andrews and Newman]{Divisibility and distribution of  $MEX$ related  integer partitions of Andrews and Newman}
\author{Chiranjit Ray}
\address{Mathematics and Data Science Group, Indian Institute of Information Technology Sri
	City, Tirupati District - 517 646, Andhra Pradesh, India}
\curraddr{}
\email{chiranjitray.m@iiits.in,~ chiranjitray.m@gmail.com}

\subjclass[2010]{Primary: 05A17, 11P81, 11P83, 11F11, 11F20, 11F25.}
\date{September 10, 2021}
\keywords{Minimal excludant, Integer partition; Eta quotients; Modular forms; Hecke eigenform; Distribution.}
\thanks{} 
\begin{abstract}
Andrews and Newman introduced the minimal excludant or  ``$mex$'' function for an integer partition $\pi$ of a positive integer $n$, $mex(\pi)$, as the smallest positive integer that is not a part of $\pi$. They defined $\sigma mex(n)$ to be the sum of $mex(\pi)$ taken over all partitions $\pi$ of $n$. We  prove  infinite families of congruence and multiplicative formulas for $\sigma mex(n)$. By restricting to the part of $\pi$,  Andrews and Newman also introduced $moex(\pi)$ to be the smallest odd integer that is not a part of $\pi$ and $\sigma moex(n)$ to be the sum of $moex(\pi)$ taken over all partitions $\pi$ of $n$. In this article, we show that for any sufficiently large $X$, the number of all positive integer $n\leq X$ such that $\sigma moex(n)$ is an even (or odd) number is at least $\mathcal{O}(\log \log X)$.
\end{abstract}
\maketitle
\section{Introduction and statement of results}
Fraenkel and Peled \cite{Fraenkel2015} defined the minimal excludant or ``$mex$'' function on a set $S$ of positive integers is the least positive integer not in $S$. Perhaps the notion of the $mex$ function was introduced in  1930's  and is known for its uses in combinatorial game theory \cite{Grundy1939, Sprague1935}.

\par A partition of a non-negative integer $n$ is a non-increasing sequence of positive integers whose sum is $n$. Let $\pi$ be a partition of a positive integer $n$,  and let the set of all partitions of $n$ be denoted by $\mathcal{P}(n)$. Recently, Andrews and Newman \cite{Andrews2019} considered the {\it minimal excludant function}  applied to integer partitions. The minimal excludant of $\pi$, $mex(\pi)$, is the smallest positive integer which is not a part of $\pi$.  Thus if $\pi$ is $6+4+3+2+1$, then $mex(\pi)=5$.   Then, for each positive integer $n$, they defined $$\sigma mex(n)~:=\sum_{\pi \in \mathcal{P}(n)} mex (\pi).$$
For example,  $\sigma mex(4)=9$ with the relevant $mex$ partitions being:  $mex(4)=1$, $mex(3+1)=2$, $mex(2+2)=1$, $mex (2+1+1)=3$, and  $mex(1+1+1+1)=2$. The generating function of $\sigma mex(n)$ is 
\begin{align}\label{1.1}
\sum_{n=0}^{\infty}\sigma mex(n) q^n = (-q;q)_{\infty}^2,
\end{align}
where  the $q$-shifted factorial $(a; q)_{\infty}:=\prod_{n=1}^{\infty}(1-aq^{n-1}), ~~|q|<1.$
It follows from \cite{Andrews2019} that $\sigma mex(n)$ is almost always even and is odd exactly when $n$ is of the form $j(3j\pm 1).$
The idea of minimal excludant for the parts of a partition can be restricted to parts in a specific arithmetic progression. In the same article  \cite{Andrews2019},  Andrews and Newman defined  minimal odd excludant function, $moex(\pi)$, to be the smallest odd integer that is not a part of $\pi$.
In addition, for each positive integer $n$, they defined $$\sigma moex(n):=\sum_{\pi \in \mathcal{P}(n)} moex (\pi),$$ and  proved that the generating function for $\sigma moex(n)$ is given by 
\begin{align}\label{1.2}
\sum_{n=0}^{\infty}\sigma moex(n) q^n = (-q;q)_{\infty}(-q;q^2)_{\infty}^2.
\end{align}
 In this article, we study the arithmetic properties of Fourier coefficients of certain integer weight modular forms to  understand  the arithmetic properties of $\sigma mex(n)$ in a better way. We also present a parity result and distribution for  $\sigma moex(n)$.
Let us note that $\sigma mex(2n+1)$ is always even, for any positive integer $n$, as
\begin{align*}
 \sum_{n=0}^{\infty}\sigma mex(n) q^n = (-q;q)_{\infty}^2\equiv (q^2;q^2)_{\infty} \Mod{2}.
 \end{align*}
Recently, Ray and Barman  \cite[Theorem~1.6]{ray2020} studied the  divisibility of Uncu's partition function \cite{Uncu2018}, $\mathcal{EO}_u(n)$  by $2^k$, where $k$ is any positive integer. After some elementary calculations we observed that the generating function of $\mathcal{EO}_u(2n)$ and $\sigma mex(n)$ are the same. Therefore, for any positive integer $k$, we have
\begin{align*}
		\lim_{X\to\infty} \frac{\# \left\{0<n\leq X: \sigma mex(n)\equiv 0\Mod{2^k}\right\}}{X}&=1.
		\end{align*}
In fact, for almost every non-negative integer $N$ in an arithmetic progression $Tn+R$, 
the integer $\sigma mex(N)$ is a multiple of $2^k$. 
  On the other hand, there exists a positive number $\alpha$  such that there are at most $\mathcal{O}\left(\frac{X}{log^{\alpha}X}\right)$ many positive integer $n\leq X$ for which $\sigma mex(n)$ is not divisible  by $2^k$. 
Here we find  infinitely many arithmetic progressions where congruences for $\sigma mex(n)$ hold. Throughout this article, by $p \equiv a_1, a_2,\cdots, a_k  \Mod {M}$ we mean  $ p\equiv  i \Mod {M}$ for  $i\in \{a_1, a_2,\cdots, a_k \}$.\\

 \noindent  We deduce the following infinite families of congruence for $\sigma mex(n)$ modulo $4$ using the theory of Hecke eigenforms. 
 \begin{theorem}\label{thm2}
 	Let $k, n$ be non-negative integers. For each $i$ with $1\leq i \leq k+1$, consider the prime numbers $p_i$ such that $p_i \geq 5$ and  $p_i  \equiv 5, 7, 11 \Mod {12}$. Then, for any integer $j \not\equiv 0 \Mod {p_{k+1}}$, we have
 	\begin{align*} 
 	{\sigma mex}\left(p_1^2\dots p_{k+1}^2n + p_1^2\dots p_{k}^2p_{k+1}j +\frac{p_1^2\dots p_{k}^2p_{k+1}^2-1}{12}\right) \equiv 0 \Mod 4.
 	\end{align*}
 \end{theorem}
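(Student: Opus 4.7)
The plan is to reduce the congruence to a vanishing statement for Fourier coefficients of a weight one cusp form and then invoke the multiplicativity of Hecke eigenform coefficients.

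First I would establish $\sum_{n\geq 0}\sigma mex(n)q^n\equiv(q;q)_\infty^2\pmod 4$ by proving $(q;q)_\infty^2\equiv(-q;q)_\infty^2\pmod 4$: combining the identity $(q;q)_\infty^2(-q;q)_\infty^2=(q^2;q^2)_\infty^2$ (from $(1-q^n)(1+q^n)=1-q^{2n}$) with the congruence $(q;q)_\infty^4\equiv(q^2;q^2)_\infty^2\pmod 4$ (which follows from $(1-q^n)^4\equiv(1-q^{2n})^2\pmod 4$) and cancelling a factor $(q;q)_\infty^2$ gives the claim. Setting $F(z):=\eta(12z)^2=q\prod_{n\geq 1}(1-q^{12n})^2=\sum_{m\geq 0}a(m)q^m$, we obtain $a(m)=0$ for $m\not\equiv 1\pmod{12}$ and $a(12n+1)\equiv\sigma mex(n)\pmod 4$ for all $n\geq 0$. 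Ligozat's criterion places $F\in S_1(\Gamma_0(144),\chi)$ for a quadratic character $\chi$.

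A direct computation yields
\[
12\!\left(p_1^2\cdots p_{k+1}^2 n+p_1^2\cdots p_k^2 p_{k+1} j+\tfrac{p_1^2\cdots p_{k+1}^2-1}{12}\right)+1=p_1^2\cdots p_k^2\,p_{k+1}\,M,
\]
where $M:=p_{k+1}(12n+1)+12j$. Since $\gcd(12,p_{k+1})=1$ and $j\not\equiv 0\pmod{p_{k+1}}$, one has $p_{k+1}\nmid M$. Writing $Z:=p_1^2\cdots p_k^2 p_{k+1} M$ and using the implicit distinctness of the primes $p_i$, the $p_{k+1}$-adic valuation of $Z$ is exactly $1$, so $Z=p_{k+1}\cdot Z'$ with $\gcd(p_{k+1},Z')=1$. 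Thus the theorem reduces to $a(Z)\equiv 0\pmod 4$.

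The decisive step is the following. Assuming $F$ is a normalised Hecke eigenform (so that $a(mn)=a(m)a(n)$ whenever $\gcd(m,n)=1$), multiplicativity gives $a(Z)=a(p_{k+1})\,a(Z')$. But the support condition on $F$ forces $a(p_{k+1})=0$, because $p_{k+1}\equiv 5,7,$ or $11\pmod{12}$ means $p_{k+1}\not\equiv 1\pmod{12}$; therefore $a(Z)=0$, which proves the congruence. Observe that the primes $p_1,\ldots,p_k$ enter only through the arithmetic progression in the theorem's statement, so no induction on $k$ is required.

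The main technical obstacle is showing that $F=\eta(12z)^2$ is indeed a Hecke eigenform in $S_1(\Gamma_0(144),\chi)$; only then do we have the multiplicativity used above. One natural route is to identify $F$ with a CM newform: its $q$-expansion admits a theta-series presentation on a sublattice of $\mathbb{Z}[i]$ twisted by a Hecke character, so both the eigenform property and the vanishing $\lambda_p=0$ for $p\not\equiv 1\pmod{12}$ follow from CM theory. The latter vanishing is also a consistency check: since $p^2\equiv 1\pmod{12}$, the formula $T_p F=\sum_m[a(pm)+\chi(p)a(m/p)]q^m$ shows that $T_p F$ is supported on $\{m\equiv p\pmod{12}\}$, disjoint from the support of $F$ when $p\not\equiv 1\pmod{12}$, forcing $T_p F=\lambda_p F=0$ and hence $\lambda_p=0$. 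A more computational alternative is to verify the eigenform relations directly up to Sturm's bound for $S_1(\Gamma_0(144),\chi)$.
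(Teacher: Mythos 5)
Your overall strategy is sound and in fact more direct than the paper's. The paper reaches the same reduction $\sigma mex(n)\equiv a(12n+1)\pmod 4$, where $\eta(12z)^2=\sum a(n)q^n$ is a Hecke eigenform with $\lambda_p=0$ for $p\not\equiv 1\pmod{12}$ (the eigenform property is settled by citing Martin's classification of multiplicative eta quotients rather than CM theory or a Sturm-bound check). But instead of factoring the argument once and invoking multiplicativity, the paper iterates two consequences of $a(pn)+\left(\frac{-1}{p}\right)a(n/p)=0$, namely $a(p^2n+pr)=0$ when $p\nmid n$ and $a(p^2n)=-\left(\frac{-1}{p}\right)a(n)$, peeling off $p_1^2,\dots,p_k^2$ one factor at a time and finishing with the vanishing statement at $p_{k+1}$. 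Your single factorization $12N+1=p_1^2\cdots p_k^2\,p_{k+1}\bigl(p_{k+1}(12n+1)+12j\bigr)$ combined with $a(p_{k+1})=0$ collapses that induction and avoids carrying along the signs $(-1)^k\prod_i\left(\frac{-1}{p_i}\right)$ that the paper's argument accumulates and then discards.

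There is, however, one genuine gap: you appeal to ``the implicit distinctness of the primes $p_i$'' to get $v_{p_{k+1}}(Z)=1$. The theorem does not assume the $p_i$ are distinct, and the paper immediately specializes to $p_1=\dots=p_{k+1}=p$ to obtain congruences in progressions modulo $p^{2(k+1)}$, so the repeated-prime case is essential, not a degenerate afterthought. If $p_{k+1}$ occurs $m\geq 1$ times among $p_1,\dots,p_k$, then $v_{p_{k+1}}(Z)=2m+1$ and the decomposition $a(Z)=a(p_{k+1})a(Z')$ with $\gcd(p_{k+1},Z')=1$ is not available. The repair is short: for a normalised eigenform of weight one the prime-power coefficients satisfy $a(p^{r+1})=\lambda_p a(p^r)-\chi(p)a(p^{r-1})$, and $\lambda_{p_{k+1}}=0$ gives $a(p_{k+1}^{2m+1})=\bigl(-\chi(p_{k+1})\bigr)^{m}a(p_{k+1})=0$, whence $a(Z)=a(p_{k+1}^{2m+1})\,a\bigl(Z/p_{k+1}^{2m+1}\bigr)=0$ since the cofactor is coprime to $p_{k+1}$. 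With this amendment your argument is complete; the remaining ingredients (the reduction mod $4$ via $(q;q)_\infty^4\equiv(q^2;q^2)_\infty^2\pmod 4$, the support of $\eta(12z)^2$ on exponents $\equiv 1\pmod{12}$, and the vanishing $\lambda_p=0$ for $p\not\equiv 1\pmod{12}$) all match the paper's Lemmas and are correct.
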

\noindent Now if we assume $p_1= p_2= \cdots=p_{k+1}=p$ in Theorem \ref{thm2}, then we derive
 \begin{align*} 
 \sigma mex\left(p^{2(k+1)}n + p^{2k+1}j +\frac{p^{2(k+1)}-1}{12}\right) \equiv 0 \Mod 4,
 \end{align*}
 where  $j \not\equiv 0 \Mod p$.
 In particular, for all $n\geq 0$ and $j\equiv 1, 2, 3, 4\Mod{5}$, we have
 \begin{align*} 
 \sigma mex\left(25n + 5j + 2\right) \equiv 0 \Mod{4}.
 \end{align*}

 Additionally, we prove the following multiplicative formulas for $\sigma mex(n)$ modulo~$4$.
 \begin{theorem} \label{thm2.2_2}
 	Let $k$ be a positive integer and $i \in \{5, 7, 11\}$. Suppose $p$ is a prime number such that $p \equiv i \Mod {12}$. Let $ \delta $ be a non-negative integer such that $p$ divides $12\delta  + i$, then
 	\begin{align*} 
 	{\sigma mex}\left(p^{k+1} n+ p\delta + \frac{pi-1}{12}\right) \equiv f(p)~{\sigma mex}\left(p^{k-1}n + \frac{12\delta +i-p}{12p}\right) \Mod {4},
 	\end{align*}
 	where $ f(p)$ is defined by$$ f(p)=
 	\begin{cases}
 	-1 & \text{if $p \equiv 5 \Mod {12}$,}\\
 	1 & \text{if $p \equiv 7, 11 \Mod {12}$.}\\
 	\end{cases}$$ 		
 \end{theorem}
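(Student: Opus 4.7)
The plan is to lift the generating function $\sum \sigma mex(n)q^n = (-q;q)_\infty^2$ modulo $4$ to a weight-one eta-quotient and exploit its Hecke-eigenform structure. The key elementary input is the identity $(1-q^n)^2 \equiv (1+q^n)^2 \pmod{4}$ (their difference is $4q^n$), which yields $(-q;q)_\infty^2 \equiv (q;q)_\infty^2 \pmod{4}$. Multiplying through by $q^2$ and substituting $q \mapsto q^{24}$ then gives
\[
\sum_{n\ge 0} \sigma mex(n)\, q^{24n+2} \equiv \eta(24\tau)^2 \pmod{4}.
\]
Writing $\eta(24\tau)^2 = \sum_m a(m)\, q^m$, one has $\sigma mex(n) \equiv a(24n+2) \pmod{4}$, and $a(m) = 0$ unless $m \equiv 2 \pmod{24}$.

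Next I would invoke the modular input, which is exactly the machinery already behind Theorem~\ref{thm2}: $\eta(24\tau)^2$ is a weight-one cusp form on $\Gamma_0(576)$ with Nebentypus $\chi(d) = \left(\tfrac{-1}{d}\right)$ (the standard eta-quotient recipe gives $\left(\tfrac{(-1)\cdot 24^2}{d}\right) = \left(\tfrac{-1}{d}\right)$ since $24^2$ is a perfect square), and it is a Hecke eigenform whose eigenvalue $\lambda_p$ vanishes at every prime $p \geq 5$ with $p \equiv 5, 7, 11 \pmod{12}$. The vanishing is transparent from the support condition: for any such $p$ and any $m \equiv 2 \pmod{24}$, $pm \equiv 2p \not\equiv 2 \pmod{24}$, so $a(pm) = 0$, and the Hecke relation then forces $\lambda_p = 0$.

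Given $\lambda_p = 0$, the Hecke relation $a(pn) + \chi(p)\, a(n/p) = \lambda_p\, a(n)$ applied at $n = pm$ collapses to $a(p^2 m) = -\chi(p)\, a(m)$ for every $m \geq 1$. It remains to match indices. Set $M = 24B + 2$ with $B = p^{k-1} n + (12\delta + i - p)/(12p)$, which is an integer because $p \mid 12\delta + i$ (by hypothesis) and $12 \mid i - p$ (since $p \equiv i \pmod{12}$). A direct calculation gives $p^2 M = 24A + 2$ with
\[
A = p^2 B + \frac{p^2 - 1}{12} = p^{k+1} n + p\delta + \frac{pi-1}{12},
\]
so transporting $a(p^2 M) = -\chi(p)\, a(M)$ back through $\sigma mex(n) \equiv a(24n+2) \pmod{4}$ yields $\sigma mex(A) \equiv -\chi(p)\, \sigma mex(B) \pmod{4}$. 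Since $\chi(p) = (-1)^{(p-1)/2}$ equals $+1$ for $p \equiv 5 \pmod{12}$ (where $p \equiv 1 \pmod 4$) and $-1$ for $p \equiv 7, 11 \pmod{12}$ (where $p \equiv 3 \pmod 4$), we recover $f(p) = -\chi(p)$ exactly as stated.

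The main obstacle, already handled in the proof of Theorem~\ref{thm2}, is the modular input itself: confirming that $\eta(24\tau)^2$ is a genuine Hecke eigenform on $\Gamma_0(576)$ with $\chi = \left(\tfrac{-1}{\cdot}\right)$, and that its Hecke eigenvalue vanishes at each prime $p \geq 5$ with $p \equiv 5, 7, 11 \pmod{12}$. Once this structural fact is in place, the derivation above is essentially the arithmetic identity $p^2(24B+2) = 24A + 2$ together with the sign identification $f(p) = -\chi(p)$.
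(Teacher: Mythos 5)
Your argument is correct and follows essentially the same route as the paper: reduce $(-q;q)_\infty^2 \equiv (q;q)_\infty^2 \pmod 4$, realize this as the weight-one Hecke eigenform $\eta(12z)^2$ (the paper works at level $144$ with exponent $12n+1$ rather than your rescaled $\eta(24\tau)^2$ at level $576$), deduce $\lambda_p=0$ for $p\equiv 5,7,11 \pmod{12}$ from the support of the Fourier coefficients, and unwind the resulting two-term relation $a(p^2m)=-\left(\frac{-1}{p}\right)a(m)$ with the same index bookkeeping and the same evaluation of $\left(\frac{-1}{p}\right)$. The only caveat is that $\eta(24\tau)^2$, being the $V_2$-shift of the paper's form, has vanishing first coefficient and is an eigenform only for the $T_p$ with $p\nmid 576$ --- which is all your argument uses.
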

 \begin{corollary}\label{cor2}
 	Let $k$ be a positive integer and $p$ be a prime number such that $p  \equiv 5, 7, 11 \Mod {12}$. Then 
 	\begin{align*} 	
 	{\sigma mex}\left(\frac{p^{2k}(12n+1)-1}{12}\right) \equiv  f(p)^k  ~~ {\sigma mex}(n) \Mod 4.
 	\end{align*}
 \end{corollary}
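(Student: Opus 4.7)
The corollary is tailored to follow from Theorem~\ref{thm2.2_2} by iteration: I apply the theorem $k$ times, each time lowering the exponent of $p$ in the argument from $2k$ to $2(k-1)$ and collecting a factor of $f(p)$ modulo $4$ at each step. Equivalently, this is a short induction on $k$ whose base case $k = 1$ is Theorem~\ref{thm2.2_2} itself.

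The first step is to rewrite
\begin{align*}
\frac{p^{2k}(12n+1)-1}{12} = p^{2k}\, n + \frac{p^{2k}-1}{12}
\end{align*}
and compare this with the left-hand argument $p^{K+1} m + p\delta + \frac{pi-1}{12}$ in Theorem~\ref{thm2.2_2}. The choices $K = 2k-1$, $m = n$ and $\delta = \frac{p^{2k-1}-i}{12}$ make the two expressions identical, because $p\delta + \frac{pi-1}{12} = \frac{p^{2k}-1}{12}$. Since $p \geq 5$ is prime with $p \equiv i \Mod{12}$ and $i \in \{5, 7, 11\}$, one has $i^2 \equiv 1 \Mod{12}$, and therefore $p^{2k-1} \equiv p \equiv i \Mod{12}$, so $\delta$ is a non-negative integer; moreover $12\delta + i = p^{2k-1}$ is visibly divisible by $p$. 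Hence the hypotheses of Theorem~\ref{thm2.2_2} are satisfied.

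Feeding these values in, the right-hand argument produced by Theorem~\ref{thm2.2_2} becomes
\begin{align*}
p^{2(k-1)} n + \frac{12\delta + i - p}{12p} = p^{2(k-1)} n + \frac{p^{2k-1}-p}{12p} = \frac{p^{2(k-1)}(12n+1)-1}{12},
\end{align*}
which has the same shape as the original but with $k$ replaced by $k-1$. A single application of the theorem therefore yields
\begin{align*}
\sigma mex\!\left(\frac{p^{2k}(12n+1)-1}{12}\right) \equiv f(p)\, \sigma mex\!\left(\frac{p^{2(k-1)}(12n+1)-1}{12}\right) \Mod{4},
\end{align*}
and iterating $k$ times collapses the argument to $\sigma mex(n)$ while accumulating the total factor $f(p)^k$.

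The proof is essentially bookkeeping, and there is no substantial obstacle. The only point requiring attention is verifying, at each of the $k$ iterations, that the analogous parameter $\delta_j = \frac{p^{2j-1}-i}{12}$ (for $j = k, k-1, \dots, 1$) is a non-negative integer satisfying $p \mid 12\delta_j + i$; both conditions follow uniformly from the residue identity $p^{2j-1} \equiv i \Mod{12}$ established above, so no ingredient beyond Theorem~\ref{thm2.2_2} is needed.
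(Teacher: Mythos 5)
Your proposal is correct and follows essentially the same route as the paper: both instantiate Theorem~\ref{thm2.2_2} so that the left-hand argument becomes $p^{2k}n+\frac{p^{2k}-1}{12}$ via the choice $12\delta+i=p^{2k-1}$, obtain the one-step recursion in $k$, and iterate $k$ times to accumulate $f(p)^k$. The only cosmetic difference is that the paper keeps the theorem's exponent equal to $k$ and substitutes $n\mapsto np^{k-1}$, while you set the exponent to $2k-1$ directly; your explicit verification that each $\delta_j$ is a non-negative integer with $p\mid 12\delta_j+i$ is a welcome detail the paper leaves implicit.
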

 \noindent In particular, if we take $p=5, 7, 11$ and $k=1$,  then from Corollary~\ref{cor2} we have the following:
 \begin{align*} 
 {\sigma mex}\left(n\right) \equiv (-1)\cdot{\sigma mex}\left(25n+ 2\right)  \equiv ~{\sigma mex}\left(49n+ 4\right)  \equiv ~{\sigma mex}\left(121n+ 10\right) \Mod{4} .
 \end{align*}
 \\
Next we study  $\sigma moex(n)$ and establish the following results.
\begin{theorem}\label{theorem222}
 	For every positive integer $n$, we have
 	\begin{align*}
 	\left\{1\leq n \leq X: \sigma moex(n)~ \text{is an even integer} \right\} \geq \alpha \log \log X,
 	\end{align*}
 	where $\alpha>0$ is a constant.
 \end{theorem}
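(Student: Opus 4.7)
The plan is to reduce the parity of $\sigma moex(n)$ to that of the classical partition function $p(n)$ by simplifying the generating function \eqref{1.2} modulo $2$, and then to invoke a lower-bound estimate for the count of $n\leq X$ with $p(n)$ even.

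For the reduction, I would apply Jacobi's triple product in the specialization $z=1$, which gives
\[
(q^{2};q^{2})_{\infty}\,(-q;q^{2})_{\infty}^{2} \;=\; \sum_{n\in\mathbb{Z}} q^{n^{2}} \;=\; 1 + 2\sum_{n\geq 1} q^{n^{2}},
\]
so that, reducing modulo $2$, $(-q;q^{2})_{\infty}^{2} \equiv (q^{2};q^{2})_{\infty}^{-1} \Mod{2}$. Combining this with the elementary identity $(-q;q)_{\infty} = (q^{2};q^{2})_{\infty}/(q;q)_{\infty}$, the generating function \eqref{1.2} collapses modulo $2$ to
\[
\sum_{n=0}^{\infty}\sigma moex(n)\,q^{n}\;\equiv\;\frac{1}{(q;q)_{\infty}}\;=\;\sum_{n=0}^{\infty}p(n)\,q^{n} \Mod{2},
\]
and hence $\sigma moex(n) \equiv p(n) \Mod{2}$ for every $n\geq 0$. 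The theorem thereby becomes equivalent to the lower bound $\#\{1\leq n\leq X : p(n)\text{ is even}\} \geq \alpha\log\log X$ for $X$ sufficiently large.

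For this lower bound I would appeal to the well-developed literature on the parity of the partition function. Much stronger bounds are known (for example, Nicolas, Ruzsa and Sárközy give a $\gg \sqrt{X}\log\log X$ count of even values $p(n)$, $n\leq X$), which trivially imply the required $\log\log X$ estimate. A self-contained iterative argument is also available, using Euler's pentagonal recurrence
\[
\sum_{k\in\mathbb{Z}}(-1)^{k} p\!\left(n-\tfrac{k(3k-1)}{2}\right) \;=\; 0
\]
together with an explicit starting even value such as $p(2)=2$: by a squaring-type iteration one produces at each stage a new index $n_{k+1} = O(n_{k}^{2})$ with $p(n_{k+1})$ also even, and roughly $\log\log X$ such indices fit inside $[1,X]$ since $n_{0}^{2^{k}} \leq X$ forces $k \ll \log\log X$.

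The main obstacle is really the reduction step rather than the quantitative tail: without Jacobi's triple product the factor $(-q;q^{2})_{\infty}^{2}$ does not visibly simplify modulo $2$, and it is precisely this identity that lets the entire parity question collapse to the classical one for $p(n)$. Once the congruence $\sigma moex(n) \equiv p(n) \Mod{2}$ is secured, the $\log\log X$ estimate is an essentially immediate consequence of any of the standard quantitative parity results for $p(n)$.
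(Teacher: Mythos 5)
Your proposal is correct, and it splits naturally into two halves that compare differently with the paper. The reduction $\sum_{n\geq 0}\sigma moex(n)q^n \equiv 1/(q;q)_{\infty} \Mod{2}$, i.e.\ $\sigma moex(n)\equiv p(n)\Mod{2}$, is exactly what the paper establishes in Lemma~\ref{lemma1}; you reach it via Jacobi's theta identity $(q^2;q^2)_{\infty}(-q;q^2)_{\infty}^2=\sum_{n\in\mathbb{Z}}q^{n^2}$, while the paper simply uses $(1+q^m)^2\equiv 1+q^{2m}\Mod{2}$ together with $(-q;q)_\infty=(q^2;q^2)_\infty/(q;q)_\infty$ — so your remark that the factor $(-q;q^2)_{\infty}^2$ ``does not visibly simplify'' without the triple product is not quite right; the binomial theorem modulo $2$ already collapses it to $(q;q)_{\infty}^{-2}$. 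Where you genuinely diverge is the second half: you invoke the Nicolas--Ruzsa--S\'ark\"ozy bounds on the parity of $p(n)$, which indeed give far more than is claimed (on the order of $\sqrt{X}$ even values up to $X$), so the stated $\log\log X$ bound follows trivially; this buys a much stronger conclusion at the cost of importing a nontrivial external theorem. The paper instead stays self-contained in the style of Kolberg: it converts the congruence into the pentagonal recurrence of Lemma~\ref{lemma1}, proves (Lemma~\ref{lemma222}) that every interval $\left[\ell,\tfrac{\ell(3\ell+1)}{2}\right]$ contains an $n$ with $\sigma moex(n)$ even by a parity-counting contradiction (the recurrence at $n=\tfrac{\ell(3\ell+1)}{2}$ would be a sum of $2\ell$ odd terms plus $\sigma moex(0)=1$), and then tiles $[1,X]$ by the iterates $a_k=\tfrac{a_{k-1}(3a_{k-1}+1)}{2}\leq 2^{2^k}$ to extract $\gg\log\log X$ such intervals. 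Your ``self-contained iterative'' alternative is essentially this argument, though as sketched it is imprecise: one does not propagate evenness from one index to the next, but rather shows each dyadically-squared interval independently contains an even value.
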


 \begin{theorem}\label{theorem111}
 	For every positive integer $n$, we have
 	\begin{align*}
 	\left\{1\leq n \leq X: \sigma moex(n)~ \text{is an odd integer} \right\} \geq \beta \log \log X,
 	\end{align*}
 	where $\beta>0$ is a constant.
 \end{theorem}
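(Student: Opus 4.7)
The plan is to reduce the parity of $\sigma moex(n)$ to that of the ordinary partition function $p(n)$, and then to invoke a known quantitative lower bound on the number of $n\le X$ with $p(n)$ odd.

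Starting from the generating function~\eqref{1.2}, I would first observe that $(-q;q)_\infty\equiv (q;q)_\infty\pmod 2$ and $(-q;q^2)_\infty\equiv (q;q^2)_\infty\pmod 2$, so that
\[
\sum_{n=0}^{\infty}\sigma moex(n)\,q^n\equiv (q;q)_\infty\,(q;q^2)_\infty^{2}\pmod 2.
\]
Next I would apply Gauss's identity $(-q;q^2)_\infty^{2}(q^2;q^2)_\infty=\sum_{n\in\mathbb Z}q^{n^2}$, whose right-hand side equals $1+2\sum_{n\ge 1}q^{n^2}\equiv 1\pmod 2$. Reducing the left-hand side modulo $2$ gives $(q;q^2)_\infty^{2}(q^2;q^2)_\infty\equiv 1\pmod 2$. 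Combined with the Frobenius congruence $(q^2;q^2)_\infty\equiv (q;q)_\infty^{2}\pmod 2$, this yields $\bigl((q;q^2)_\infty(q;q)_\infty\bigr)^{2}\equiv 1\pmod 2$, and therefore $(q;q^2)_\infty(q;q)_\infty\equiv 1\pmod 2$, since the ring of formal power series over $\mathbb F_2$ is an integral domain. Consequently
\[
\sum_{n=0}^{\infty}\sigma moex(n)\,q^n\equiv (q;q^2)_\infty\equiv\frac{1}{(q;q)_\infty}=\sum_{n=0}^{\infty}p(n)\,q^n\pmod 2,
\]
so that $\sigma moex(n)\equiv p(n)\pmod 2$ for every $n\ge 0$.

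With this congruence in hand, Theorem~\ref{theorem111} reduces to the statement that $\#\{1\le n\le X:p(n)\text{ is odd}\}\ge\beta\log\log X$ for some constant $\beta>0$. This is a classical (and in fact much stronger) result: it follows, for instance, from the lower bounds of Nicolas--Ruzsa--S\'ark\"ozy, and from subsequent refinements due to Ahlgren, Ono, and others. I would simply quote any one of these bounds to finish.

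The reduction step is clean and relies only on elementary $q$-series manipulations. The main obstacle is the quantitative parity statement for $p(n)$. A fully self-contained proof would require viewing $\sum_{n} p(n)\,q^{24n+1}$ as the $q$-expansion of a half-integral-weight modular object congruent to $\eta(24z)^{-1}\pmod 2$ on $\Gamma_0(576)$, and then running a Hecke-operator argument --- starting from a single odd value $p(n_0)$ and using the action of the operators $T_{\ell^2}$ modulo $2$ to propagate a chain of $\gg\log\log X$ odd values up to $X$, in the spirit of Ono's work on the parity of the partition function.
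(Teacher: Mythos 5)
Your reduction is correct and your overall strategy works, but it is a genuinely different route from the paper's. You prove $\sigma moex(n)\equiv p(n)\pmod 2$ (the detour through Gauss's identity is unnecessary --- $(q;q^2)_\infty(q;q)_\infty=\tfrac{(q;q)_\infty^2}{(q^2;q^2)_\infty}\equiv 1\pmod 2$ directly from $(q^2;q^2)_\infty\equiv(q;q)_\infty^2$ --- but it is valid) and then outsource the quantitative parity statement for $p(n)$ to the literature. The paper also arrives at $\sum_n\sigma moex(n)q^n\equiv 1/(q;q)_\infty\pmod 2$ in Lemma~\ref{lemma1}, but never invokes known results on $p(n)$; instead it reproves the needed fact from scratch in Kolberg's elementary style: the pentagonal number theorem gives the recurrence of Lemma~\ref{lemma1}, Lemma~\ref{lemma111} uses a parity-counting contradiction to locate an odd value of $\sigma moex$ in every interval $\left[2\ell-1,\tfrac{\ell(3\ell-1)}{2}\right]$, and the nested intervals $a_k=\tfrac{a_{k-1}(3a_{k-1}-1)}{2}$, which grow doubly exponentially, yield the $\log\log X$ count. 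Your approach buys brevity and, if you quote Nicolas--Ruzsa--S\'ark\"ozy (with Serre's appendix) or Ahlgren, a conclusion far stronger than $\log\log X$ (a power of $X$ divided by a logarithm); the cost is reliance on a nontrivial external theorem, so you should cite the precise odd-value bound rather than gesture at the even-value result, and the half-integral-weight Hecke machinery you sketch at the end is not needed for a bound this weak. The paper's approach buys a short, self-contained, entirely elementary proof that parallels its treatment of the even case in Theorem~\ref{theorem222}.
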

We use  $Mathematica$ \cite{mathematica} for our computations.

\section{Proofs of Theorem~\ref{thm2} and  Theorem~\ref{thm2.2_2}}
In this  section, we prove Theorem~\ref{thm2} and  Theorem~\ref{thm2.2_2}. Before that  we recall some definitions and facts relating to the arithmetic of classical modular forms.
Throughout this chapter, we consider only integer weight modular forms and $N$ to be a positive integer. For  more details one can consult \cite{ koblitz1993, ono2004}.
\begin{definition}\cite[Definition 1.15]{ono2004}
	Let $\chi$ be a Dirichlet character modulo $N$. Then a modular form $f(z)\in M_{\ell}(\Gamma_1(N))$ has {\it Nebentypus character} $\chi$ if
	$$f\left( \frac{az+b}{cz+d}\right)=\chi(d)(cz+d)^{\ell}f(z)$$ for all $z\in \mathbb{H}$ and all $\begin{bmatrix}
	a  &  b \\
	c  &  d      
	\end{bmatrix} \in \Gamma_0(N)$. The space of such modular forms is denoted by $M_{\ell}(\Gamma_0(N), \chi)$.  Here $\Gamma_0(N):=\text{SL}_2 (\mathbb{Z})\cap \begin{bsmallmatrix}
	\mathbb{Z}  &  \mathbb{Z} \\
	N\mathbb{Z}  &  \mathbb{Z}      
	\end{bsmallmatrix}$  is the Hecke congruence subgroup of level $N$.
\end{definition}
\noindent Let $m$ be a positive integer and $f(z) = \displaystyle \sum_{n=0}^{\infty} a(n)q^n \in M_{\ell}(\Gamma_0(N),\chi)$. Then the action of the Hecke operator $T_m$ on $f(z)$ is defined by 
	\begin{align}
\label{hecke1}	f(z)|T_m := \sum_{n=0}^{\infty} \left(\sum_{d\mid \gcd(n,m)}\chi(d)d^{\ell-1}a\left(\frac{nm}{d^2}\right)\right)q^n.
	\end{align}
We note that $a(n)=0$ unless $n$ is a non-negative integer. The modular form $f(z)$ is called a {\it Hecke eigenform} if for every $m\geq2$ there exists a complex number $\lambda(m)$ for which 
	\begin{align}\label{hecke3}
	f(z)|T_m = \lambda(m)f(z).
	\end{align}
In  many  cases  the  generating  functions  for  combinatorial  objects are closely related to modular forms, particularly to  $\prod_{\delta\mid N}\eta(\delta z)^{r_\delta}$ (called an eta quotient), where $r_{\delta}$ is an integer and the Dedekind's eta function is defined by
\begin{align*}
\eta(z):=q^{1/24}(q;q)_{\infty}=q^{1/24}\prod_{n=1}^{\infty}(1-q^n)
\end{align*} 
with  $q=e^{2\pi iz}$ and $z\in \mathbb{H}$ (the complex upper half-plane). Gordon, Hughes, and Newman  \cite{Gordon1993, newman, newman1} established the following theorem regarding the modular properties of eta quotients.
\begin{theorem}\cite[Theorem 1.64]{ono2004}\label{thm_ono1} Suppose $f(z)=\displaystyle\prod_{\delta\mid N}\eta(\delta z)^{r_\delta}$ 
	is an eta quotient such that 
	\begin{align*}
		\ell=\displaystyle\frac{1}{2}\sum_{\delta\mid N}r_{\delta}\in \mathbb{Z},\quad  \quad \sum_{\delta\mid N} \delta r_{\delta}\equiv 0 \Mod{24}, \quad \text{and}  \quad \sum_{\delta\mid N} \frac{N}{\delta}r_{\delta}\equiv 0 \Mod{24}.
	\end{align*}	
	Then 
	$$
	f\left( \frac{az+b}{cz+d}\right)=\chi(d)(cz+d)^{\ell}f(z)
	$$
	for every  $\begin{bmatrix}
	a  &  b \\
	c  &  d      
	\end{bmatrix} \in \Gamma_0(N)$. Here the Nebentypus character
	$\chi(d)$ is given by $\left(\frac{(-1)^{k} \prod_{\delta\mid N}\delta^{r_{\delta}}}{d}\right).$
\end{theorem}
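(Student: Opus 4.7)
The plan is to derive the claimed transformation law for $f(z)$ by reducing to the classical transformation of the Dedekind eta function under $\mathrm{SL}_2(\mathbb{Z})$, applied to each factor $\eta(\delta z)$ separately. First I would recall that for any $\gamma = \begin{pmatrix} a & b \\ c & d \end{pmatrix} \in \mathrm{SL}_2(\mathbb{Z})$ with $c > 0$, there is an identity
\[
\eta(\gamma z) \;=\; \varepsilon(a,b,c,d)\,(cz+d)^{1/2}\,\eta(z),
\]
where $\varepsilon(a,b,c,d)$ is an explicit $24$-th root of unity, expressible as a Kronecker symbol in $c,d$ times an exponential whose phase is a linear function of $a,b,c,d$ (the Dedekind-sum contribution). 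This is essentially the only analytic input.

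The key elementary step is a conjugation trick. For $\gamma \in \Gamma_0(N)$ and any $\delta \mid N$, the hypothesis $N \mid c$ ensures $\delta \mid c$, and a direct computation gives
\[
\delta\,\gamma z \;=\; \frac{a(\delta z) + b\delta}{(c/\delta)(\delta z) + d} \;=\; \tilde\gamma_{\delta}(\delta z), \qquad \tilde\gamma_{\delta} := \begin{pmatrix} a & b\delta \\ c/\delta & d \end{pmatrix} \in \mathrm{SL}_2(\mathbb{Z}).
\]
Feeding $\tilde\gamma_\delta$ into the eta transformation (with variable $\delta z$) yields $\eta(\delta\gamma z) = \varepsilon(\tilde\gamma_\delta)(cz+d)^{1/2}\eta(\delta z)$, and taking the product with multiplicities $r_\delta$ gives
\[
f(\gamma z) \;=\; (cz+d)^{\ell}\Bigl(\prod_{\delta\mid N}\varepsilon(\tilde\gamma_\delta)^{r_\delta}\Bigr)\,f(z),
\]
with weight exactly $\ell = \tfrac12\sum_{\delta}r_\delta$, matching the first hypothesis.

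The technical heart is then to identify the scalar $\prod_\delta \varepsilon(\tilde\gamma_\delta)^{r_\delta}$ with $\chi(d)$. I would insert the explicit formula for $\varepsilon$, which separates into a Kronecker factor depending on $c/\delta$ and $d$, together with an exponential whose argument collects terms proportional to $\delta$ and to $c/\delta$ (hence, after multiplying by $N$, to $N/\delta$). Summing over $\delta$ weighted by $r_\delta$ produces exactly the two linear forms $\sum_{\delta\mid N}\delta\, r_\delta$ and $\sum_{\delta\mid N}(N/\delta)\,r_\delta$ in the exponent; the two congruence hypotheses $\equiv 0 \Mod{24}$ in the statement then annihilate the entire exponential factor. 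What remains is the product of Kronecker symbols, which multiplies to $\bigl(\tfrac{\prod_\delta \delta^{r_\delta}}{d}\bigr)$, and combining with the sign $(-1)^{\ell}$ produces precisely $\chi(d) = \bigl(\tfrac{(-1)^{\ell}\prod_{\delta\mid N}\delta^{r_\delta}}{d}\bigr)$.

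The main obstacle is purely bookkeeping: the precise expression for $\varepsilon$ depends on the parity of $d$ and the sign of $c$, and one must keep careful track of the Kronecker symbol's behaviour under the substitution $c \mapsto c/\delta$ (using quadratic reciprocity together with $\delta \mid c$). Once the two divisibility hypotheses are harnessed to collapse the Dedekind-sum exponential, the character identification follows mechanically, and the proof is complete.
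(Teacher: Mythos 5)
This theorem is quoted verbatim from Ono's book (it is the classical result of Gordon, Hughes, and Newman), and the paper supplies no proof of its own, so there is nothing internal to compare against. Your outline --- conjugating each factor via $\delta\gamma z=\tilde\gamma_\delta(\delta z)$ with $\tilde\gamma_\delta=\begin{psmallmatrix}a & b\delta\\ c/\delta & d\end{psmallmatrix}\in\mathrm{SL}_2(\mathbb{Z})$, multiplying the eta multipliers to get weight $\ell$, using the two congruences modulo $24$ to annihilate the exponential (Dedekind-sum) part of the multiplier, and collapsing the product of Kronecker symbols $\prod_\delta\bigl(\tfrac{c/\delta}{d}\bigr)^{r_\delta}$ to $\bigl(\tfrac{\prod_\delta \delta^{r_\delta}}{d}\bigr)$ since $\sum_\delta r_\delta=2\ell$ is even --- is precisely the standard argument and is correct in its essentials, though the character identification is sketched rather than carried out; note also that the $(-1)^{k}$ in the quoted statement should read $(-1)^{\ell}$, as you in fact wrote.
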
 
\noindent Suppose that $f$ is an eta quotient satisfying the conditions of Theorem \ref{thm_ono1}.  Additionally, if $f$ is also holomorphic at all of the cusps of $\Gamma_0(N)$, then $f\in M_{\ell}(\Gamma_0(N), \chi)$.  To check the holomorphicity at cusps of $f(z)$ it suffices to check that the orders at the cusps are non-negative. The following theorem of Ligozat~\cite{Ligozat1972} gives the necessary criterion for determining orders of an eta quotient at a cusp. 
\begin{theorem}\cite[Theorem 1.65]{ono2004}\label{thm_ono1.1}
	Let $c, d,$ and $N$ be positive integers with $d\mid N$ and $\gcd(c, d)=1$. If $f(z)$ is an eta quotient satisfying the conditions of Theorem~\ref{thm_ono1} for $N$,then the order of vanishing of $f(z)$ at the cusp $\frac{c}{d}$ 
	is $$\frac{N}{24}\sum_{\delta\mid N}\frac{\gcd(d,\delta)^2r_{\delta}}{\gcd(d,\frac{N}{d})d\delta}.$$
\end{theorem}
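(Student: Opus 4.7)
The plan is to compute the $q$-expansion of $f$ at the cusp $c/d$ directly from the Dedekind transformation law for $\eta$, and then read off the leading exponent. Recall that for every $\gamma = \begin{pmatrix} a & b \\ c & d \end{pmatrix} \in SL_2(\mathbb{Z})$ with $c > 0$ one has $\eta(\gamma z) = \varepsilon(\gamma)(cz+d)^{1/2}\eta(z)$ for an explicit $24$th root of unity $\varepsilon(\gamma)$. Because $f = \prod_{\delta \mid N} \eta(\delta z)^{r_\delta}$ is a product, it suffices to analyse each factor $\eta(\delta z)$ at $c/d$ and then add the contributions weighted by $r_\delta$.

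First I would choose $\gamma_{c/d} = \begin{pmatrix} c & -y \\ d & x \end{pmatrix} \in SL_2(\mathbb{Z})$ with $cx + dy = 1$, so that $\gamma_{c/d}(\infty) = c/d$. Setting $h := \gcd(\delta, d)$ and $\delta = h\delta_1$, $d = hd_1$ with $\gcd(\delta_1, d_1) = 1$, I would pick integers $p, q$ with $p\delta_1 c + q d_1 = 1$ (available because $\gcd(\delta_1 c, d_1) = 1$ follows from $\gcd(c, d) = 1$) and form
$$\widetilde{\sigma} = \begin{pmatrix} p & q \\ -d_1 & \delta_1 c \end{pmatrix} \in SL_2(\mathbb{Z}).$$
A direct matrix computation would then produce the identity
$$\widetilde{\sigma}\bigl(\delta\,\gamma_{c/d}(z)\bigr) \;=\; \frac{h^2}{\delta}\,z + t$$
for a constant $t$ depending on $c, d, p, q, x, y$; call this linear expression $w$. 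Since $\delta\,\gamma_{c/d}(z) = \widetilde{\sigma}^{-1}(w)$, the eta transformation law gives
$$\eta\bigl(\delta\,\gamma_{c/d}(z)\bigr) \;=\; \varepsilon(\widetilde{\sigma}^{-1})\,(d_1 w + p)^{1/2}\,\eta(w),$$
and since $\eta(w)$ starts with $q_w^{1/24} = e^{2\pi i t/24}\,q^{h^2/(24\delta)}$, the $q$-expansion of $\eta(\delta z)$ after slashing by $\gamma_{c/d}$ begins with $q^{\gcd(d,\delta)^2/(24\delta)}$.

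Summing the exponents with weights $r_\delta$ produces $\sum_{\delta \mid N} r_\delta \gcd(d,\delta)^2/(24\delta)$ for the order of $f$ in the variable $q = e^{2\pi i z}$. Finally I would convert this to the order at the cusp of $\Gamma_0(N)$ by multiplying by the width of $c/d$, which equals $N/(d\gcd(d, N/d))$, a standard calculation with the stabiliser of $c/d$ inside $\Gamma_0(N)$. This yields
$$\frac{N}{24}\sum_{\delta \mid N}\frac{\gcd(d,\delta)^2\, r_\delta}{\gcd(d, N/d)\,d\,\delta},$$
which is the formula in the statement.

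The main obstacle will be the identity $\widetilde{\sigma}(\delta\,\gamma_{c/d}(z)) = (h^2/\delta)z + t$: verifying both that the $z$-coefficient collapses to exactly $h^2/\delta$ and that the denominator after substitution reduces to the scalar $\delta_1$ requires simultaneously invoking $\delta = h\delta_1$, $d = hd_1$, $p\delta_1 c + qd_1 = 1$, and $cx + dy = 1$. Once this single computation is in hand, everything else -- applying the Dedekind functional equation, tracking the $1/24$-power, multiplying by the cusp width, and summing over $\delta$ -- is routine bookkeeping.
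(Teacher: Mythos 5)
This statement is quoted verbatim from Ligozat via \cite[Theorem 1.65]{ono2004}; the paper supplies no proof of its own, so there is nothing internal to compare against. Judged on its own terms, your sketch is the standard proof and it is essentially correct. The crucial matrix identity does check out: with $M=\begin{pmatrix}\delta c & -\delta y\\ d & x\end{pmatrix}$ (so that $\delta\,\gamma_{c/d}(z)=Mz$, $\det M=\delta$) one computes $\widetilde{\sigma}M=\begin{pmatrix}h & * \\ 0 & \delta_1\end{pmatrix}$, since the lower-left entry is $c(-d_1\delta+\delta_1 d)=c\delta_1 d_1(-h+h)=0$, the lower-right entry is $\delta_1(cx+dy)=\delta_1$, and the upper-left entry is $h(p\delta_1 c+qd_1)=h$; this acts as $z\mapsto (h^2/\delta)z+t$ as you claim, and $\gcd(\delta c,d)=\gcd(\delta,d)=h$ because $\gcd(c,d)=1$. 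Two points deserve explicit attention in a full write-up: (i) the Dedekind functional equation in the form you quote requires the lower-left entry of the matrix to be positive, so you must either arrange $d_1>0$ and handle the sign of $\delta_1 c$, or use the version of the law valid for all of $\mathrm{SL}_2(\mathbb{Z})$; (ii) you should state which normalisation of ``order at a cusp'' is being used --- the formula in the statement is the $q$-order of $f\mid_\ell\gamma_{c/d}$ multiplied by the width $N/(d\gcd(d,N/d))=N/\gcd(d^2,N)$, and one should verify this width formula (a short computation with the stabiliser of the cusp in $\Gamma_0(N)$) rather than only asserting it. With those two items filled in, the argument is complete.
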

In the next lemma we see that the generating  function for  $\sigma mex(n)$ is related to modular forms.
\begin{lemma} \label{lemma2.1}For any positive integer $n$, we have 
	\begin{align*}
		\sum_{n=0}^{\infty}\sigma mex(n)q^{12n+1}\equiv \eta(12z)^{2} \Mod {4},
	\end{align*}
where $\eta(12z)^2\in M_1(\Gamma_0(144),(\frac{-1}{d}))$ is an Hecke eigenform.	
\end{lemma}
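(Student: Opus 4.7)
The plan has two parts: first derive the congruence $\sum_{n\geq 0}\sigma mex(n) q^{12n+1} \equiv \eta(12z)^2 \Mod 4$ by a direct power-series manipulation of \eqref{1.1}, then verify the modular-form and Hecke-eigenform claims using Theorem~\ref{thm_ono1} and Theorem~\ref{thm_ono1.1}.

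For the congruence, I would start from the classical identity $(-q;q)_\infty = (q^2;q^2)_\infty/(q;q)_\infty$, which gives
$$(-q;q)_\infty^2 = \frac{(q^2;q^2)_\infty^2}{(q;q)_\infty^2}.$$
The key auxiliary identity is $(q;q)_\infty^4 \equiv (q^2;q^2)_\infty^2 \Mod 4$, which is a special case of the general fact $f(q)^4 \equiv f(q^2)^2 \Mod 4$ for any $f(q)\in \mathbb{Z}[[q]]$. The latter follows immediately by noting that $f(q)^2 \equiv f(q^2) \Mod 2$, so $f(q)^2 = f(q^2) + 2h(q)$ for some $h\in \mathbb{Z}[[q]]$, and squaring yields $f(q)^4 = f(q^2)^2 + 4 f(q^2)h + 4 h^2$. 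Since $(q;q)_\infty^2$ is a unit in $\mathbb{Z}[[q]]$, dividing preserves mod-$4$ congruences, giving $(-q;q)_\infty^2 \equiv (q;q)_\infty^2 \Mod 4$. Replacing $q$ by $q^{12}$ and multiplying through by $q$ then produces
$$\sum_{n\geq 0}\sigma mex(n)\,q^{12n+1} \equiv q\,(q^{12};q^{12})_\infty^2 = \eta(12z)^2 \Mod 4.$$

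For the modular-form claim I would apply Theorem~\ref{thm_ono1} at level $N=144$ with $r_{12}=2$ and $r_\delta=0$ otherwise: the weight is $\ell = \frac{1}{2}\cdot 2 = 1\in \mathbb{Z}$, and both $\sum \delta r_\delta = 24$ and $\sum (N/\delta)r_\delta = 24$ are divisible by $24$. The Nebentypus comes out to $\chi(d)= \left(\frac{(-1)\cdot 12^2}{d}\right)= \left(\frac{-1}{d}\right)$ because $144$ is a perfect square. To check holomorphy at the cusps of $\Gamma_0(144)$ I would use Theorem~\ref{thm_ono1.1}: with only $r_{12}\ne 0$ the formula reduces to
$$\frac{\gcd(d,12)^2}{d\cdot \gcd(d,144/d)}$$
at a cusp $c/d$ with $d\mid 144$, and a brief divisor-by-divisor computation over $d\in\{1,2,3,4,6,8,9,12,16,18,24,36,48,72,144\}$ confirms the value equals $1$ at every cusp. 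Consequently $\eta(12z)^2$ is in fact a cusp form in $S_1\!\left(\Gamma_0(144),\left(\frac{-1}{d}\right)\right)\subset M_1\!\left(\Gamma_0(144),\left(\frac{-1}{d}\right)\right)$.

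The main obstacle is the final assertion that $\eta(12z)^2$ is a Hecke eigenform, a property not guaranteed by the eta-quotient construction itself. I would handle this either by a dimension computation showing that the subspace of $S_1\!\left(\Gamma_0(144),\left(\frac{-1}{d}\right)\right)$ containing $\eta(12z)^2$ is one-dimensional (checkable via the Cohen--Oesterl\'e formula or a computer algebra system, after which being an eigenform is automatic), or by identifying $\eta(12z)^2$ with the weight-one CM newform of level $144$ and character $\left(\frac{-1}{d}\right)$ that has complex multiplication by $\mathbb{Q}(i)$ and appears in standard tables; either route delivers the Hecke-eigenform property and completes the proof.
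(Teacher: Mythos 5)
Your proposal is correct and follows essentially the same route as the paper: reduce $(-q;q)_\infty^2=(q^2;q^2)_\infty^2/(q;q)_\infty^2$ to $(q;q)_\infty^2$ modulo $4$ (you supply the binomial-squaring justification that the paper leaves implicit), substitute $q\mapsto q^{12}$, and verify membership in $M_1(\Gamma_0(144),(\frac{-1}{d}))$ via Theorems~\ref{thm_ono1} and~\ref{thm_ono1.1}. The only divergence is the Hecke-eigenform claim, for which the paper simply cites Martin's classification of multiplicative eta-quotients, whereas your dimension-count or CM-newform identification is an equally valid (if more laborious) substitute.
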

 
\begin{proof}
	From \eqref{1.1} we have 
	\begin{align*}
	\sum_{n=0}^{\infty}\sigma mex(n)q^n=\frac{(q^2; q^2)_{\infty}^2}{(q; q)_{\infty}^2}\equiv(q; q)_{\infty}^2\Mod{4}.
	\end{align*}
This gives
\begin{align*}
	\sum_{n=0}^{\infty}{\sigma mex}(n)q^{12n+1} \equiv \eta(12z)^2\Mod 4.
\end{align*}
Using Theorem \ref{thm_ono1} and Theorem \ref{thm_ono1.1}, we have $\eta(12z)^2\in M_1(\Gamma_0(144),(\frac{-1}{d}))$. From \cite{martin}, we know that $\eta(12z)^2$ is a Hecke eigenform.
\end{proof}	
In the following lemma we prove some arithmetic properties of the Fourier coefficients of  $\eta(12z)^2$.
\begin{lemma}\label{lemma2.1.2}
	Suppose $\eta(12z)^2$ has a Fourier series expansion $\displaystyle\sum_{n=1}^{\infty} a(n)q^n$ and $p$ be a prime number such that $p \not\equiv 1 \Mod {12}$.  Then 
	\begin{align}\label{1.2.1}
	a(p^2n + pr) &= 0 ~~ \text{if} ~~ p\nmid n,\\
\label{1.3}	a(p^2n) + \left(\frac{-1}{p}\right) a(n)&= 0 ~~ \text{otherwise}.
	\end{align}	
\end{lemma}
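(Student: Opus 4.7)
\noindent\textbf{Proof plan for Lemma \ref{lemma2.1.2}.} The plan is to exploit the Hecke eigenform property of $\eta(12z)^2$ established in Lemma \ref{lemma2.1}. Writing $\eta(12z)^2 = q\prod_{m\ge 1}(1-q^{12m})^2 = \sum_{n\ge 1} a(n) q^n$, one sees immediately that the Fourier support of $a(n)$ is concentrated on exponents $n\equiv 1 \Mod{12}$; in particular $a(p)=0$ for every prime $p\not\equiv 1 \Mod{12}$, and $a(1)=1$.

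First, I would identify the Hecke eigenvalue $\lambda(p)$. For a prime $p\ge 5$ (so $p\nmid 144$) and a weight $1$ form in $M_1(\Gamma_0(144),\chi)$ with $\chi=\left(\frac{-1}{\cdot}\right)$, the definition \eqref{hecke1} of $T_p$ combined with the eigenform relation \eqref{hecke3} yields
\[
\sum_{n\ge 1}\bigl(a(pn)+\chi(p)\,a(n/p)\bigr)q^n \;=\; \lambda(p)\sum_{n\ge 1} a(n)q^n,
\]
with the convention that $a(n/p)=0$ unless $p\mid n$. Comparing coefficients of $q^1$ and using $a(1)=1$ gives $\lambda(p)=a(p)$. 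Under the hypothesis $p\not\equiv 1\Mod{12}$ the eigenvalue therefore vanishes, and we obtain the master relation
\[
a(pn)+\chi(p)\,a(n/p) \;=\; 0\qquad\text{for every }n\ge 1.
\]

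Next I would read off the two conclusions from this relation. For \eqref{1.3}, I would substitute $n\mapsto pn$: the second term becomes $\chi(p)\,a(n)$ and the first becomes $a(p^2 n)$, producing $a(p^2 n)+\left(\frac{-1}{p}\right) a(n)=0$ since $\chi(p)=\left(\frac{-1}{p}\right)$. For \eqref{1.2.1}, I would write $p^2 n+pr = p(pn+r)$; since $p\nmid r$ the factor $pn+r$ is coprime to $p$, so applying the master relation with $n$ replaced by $pn+r$ kills the $a(\cdot/p)$-term and forces $a(p^2 n+pr)=0$.

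The only delicate point is the computation of $\lambda(p)$, and this rests on the observation that $\eta(12z)^2$ has Fourier coefficients supported on the progression $n\equiv 1\Mod{12}$, which is transparent from the product expansion but is precisely what lets us conclude $\lambda(p)=a(p)=0$ exactly for the primes $p\not\equiv 1\Mod{12}$ targeted by the lemma. Everything else is a mechanical unwinding of the Hecke eigenform identity.
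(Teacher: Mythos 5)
Your proposal is correct and follows essentially the same route as the paper: both derive the master relation $a(pn)+\left(\frac{-1}{p}\right)a(n/p)=0$ from the Hecke eigenform property by noting $\lambda(p)=a(p)=0$ (since the coefficients are supported on $n\equiv 1\Mod{12}$), then specialize $n\mapsto pn+r$ and $n\mapsto pn$ to obtain \eqref{1.2.1} and \eqref{1.3}. Your reading of the divisibility condition in \eqref{1.2.1} as $p\nmid r$ matches the intent of the paper's proof and its later use.
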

\begin{proof}
	We have $\eta(12z)^2 = q-2q^{13}-q^{25}+2q^{37}+q^{49}+2q^{61} \dots=\displaystyle\sum_{n=1}^{\infty} a(n)q^n$. It is easy to observe that  $a(n) = 0$ if $n\not\equiv 1\Mod{12}$.  From Lemma \ref{lemma2.1} we know that  $\eta(12z)^2$ is a Hecke eigenform. Using \eqref{hecke1} and \eqref{hecke3} we get
		\begin{align*}
	\eta(12z)^2|T_p = \sum_{n=1}^{\infty} \left(a(pn) + \left(\frac{-1}{p}\right) a\left(\frac{n}{p}\right) \right)q^n = \lambda(p) \sum_{n=1}^{\infty} a(n)q^n.
	\end{align*}
	Equating the coefficients on both sides, we have the following
	\begin{align}\label{1.1.1}
	a(pn) + \left(\frac{-1}{p}\right) a\left(\frac{n}{p}\right) = \lambda(p)a(n).
	\end{align}
Since we consider the prime numbers $p \not\equiv 1 \Mod{12}$, putting $n=1$ in \eqref{1.1.1},  we obtain $a(p) = 0 = \lambda(p)$. Therefore from \eqref{1.1.1} we have 
	\begin{align}\label{new-1.1}
	a(pn) + \left(\frac{-1}{p}\right) a\left(\frac{n}{p}\right)= 0
	\end{align}
 for all prime $p \not\equiv 1 \Mod {12}.$ We conclude our proof from \eqref{new-1.1} by  taking $n=pn+r$  if $p\nmid n$, and  $n=pn$ otherwise.
	\end{proof}

\begin{proof}[Proof of Theorem \ref{thm2}] From Lemma \ref{lemma2.1} and Lemma \ref{lemma2.1.2}, we have
\begin{align*}
	\sum_{n=0}^{\infty}{\sigma mex}(n)q^{12n+1} &\equiv \eta(12z)^2\Mod 4\\
	&=\sum_{n=1}^{\infty} a(n)q^n.
\end{align*}

Therefore
\begin{align}\label{main}
\sigma mex(n)\equiv a(12n+1) \Mod{4}.
\end{align}
Substituting $n$ by $12n-pr+1$ in \eqref{1.2.1} and then applying the congruence relation  \eqref{main}, we have
	\begin{align}\label{1.4}
	\sigma mex\left(p^2n + \frac{pr(1-p^2)}{12} + \frac{p^2-1}{12}\right) \equiv 0\Mod {4}.
	\end{align}	
	Similarly, substituting $n$ by $12n+1$ in \eqref{1.3} and using the congruence relation \eqref{main}, we obtain
	\begin{align}\label{1.5}
	\sigma mex\left(p^2n + \frac{p^2-1}{12} \right)\equiv (-1) \left(\frac{-1}{p}\right) \sigma mex(n)\Mod {4}.
	\end{align}	
	Since $p \geq 5$ is prime such that $p \equiv 5,7, 11 \Mod{12}$, then $12\mid (p^2-1)$ and $\gcd \left(\frac{p^2-1}{12} , p\right) = 1$.  
	Hence if $r$ runs over a residue system excluding the multiple of $p$, so does $\frac{1-p^2}{12}r$.
    Thus  we can rewrite \eqref{1.4} as
	\begin{align}\label{1.6}
	\sigma mex\left(p^2n + \frac{p^2-1}{12} + pj\right) \equiv 0\Mod {4},
	\end{align}
where $j$ is not a multiple of $p$.	Then, for the prime number  $p_i \geq 5$  such that $p_i \equiv 5,7, 11 \Mod{12}$, we get
	\begin{align*}
	 &\sigma mex\left(p_1^2\dots p_{k}^2n + \frac{p_1^2\dots p_{k}^2-1}{12}\right)\\
	=&\sigma mex\left(p_1^2\left(p_2^2\dots p_{k}^2n + \frac{p_2^2\dots p_{k}^2-1}{12}\right)+\frac{p_1^2-1}{12}\right)\\
	\equiv&(-1) \left(\frac{-1}{p_1}\right) \sigma mex\left(p_2^2\dots p_{k}^2n + \frac{p_2^2\dots p_{k}^2-1}{12}\right)\Mod {4}.
	\end{align*}
The above congruence follows from \eqref{1.5}.  By repeating the   process $(k-1)$~times, we obtain 
		\begin{align}\label{1.7}
\notag &\sigma mex\left(p_1^2\dots p_{k}^2n + \frac{p_1^2\dots p_{k}^2-1}{12}\right)\\
&\equiv (-1)^k \left(\frac{-1}{p_1}\right)\left(\frac{-1}{p_2}\right)\dots \left(\frac{-1}{p_k}\right)\sigma mex(n)\Mod {4}.
	\end{align}
	Let $j\not\equiv 0\Mod{p_{k+1}}$. Then, \eqref{1.6} and \eqref{1.7} yield
	\begin{align*}
	&{\sigma mex}\left(p_1^2\dots p_{k+1}^2n + p_1^2\dots p_{k}^2p_{k+1}j +\frac{p_1^2\dots p_{k}^2p_{k+1}^2-1}{12}\right)\\
&\equiv (-1)^k \left(\frac{-1}{p_1}\right)\left(\frac{-1}{p_2}\right)\dots \left(\frac{-1}{p_k}\right)\sigma mex\left(p^2n + \frac{p^2-1}{12} + pj\right)\Mod {4}\\
& \equiv 0 \Mod 4.
	\end{align*}
	This concludes the theorem.
\end{proof}
\noindent The next result reveals the multiplicative nature of  $\sigma mex(n)$ and we get Corollary~\ref{cor2} as a special case of Theorem~\ref{thm2.2_2}.
\begin{proof}[Proof of Theorem~\ref{thm2.2_2}]	
	From \eqref{new-1.1}, we have 
	\begin{align}\label{thm2.2.2}
	a(pn) = (-1)\left(\frac{-1}{p}\right) a\left(\frac{n}{p}\right),
	\end{align}
where the prime $p \not\equiv 1\Mod {12}$.	 Let $i$ be a fixed number and  $ i \in \{5,7,11\}$. Suppose  $\delta$ is a non-negative integer such that  the prime $p\equiv i \Mod {12}$ divides $12\delta  + i$. Then, substituting $n$ by $12p^kn+12\delta+i$ in \eqref{thm2.2.2}, we get the following 
\begin{align}\label{thm2.2.4}
a\left(12\left( p^{k+1}n+ p\delta +\frac{pi-1}{12}\right)+1\right)= (-1)\left(\frac{-1}{p}\right) a\left(12\left(p^{k-1}n+ \frac{12\delta +i-p}{12p}\right)+1\right).
\end{align}	
Note that  $\frac{pi-1}{12}$ and $\frac{12\delta+i-p}{12p}$  are integers. Now, using \eqref{main} and \eqref{thm2.2.4} we obtain
	
	\begin{align} \label{thm2.2.4.11}
	{\sigma mex}\left(p^{k+1} n+ p\delta + \frac{pi-1}{12}\right) \equiv(-1)\left(\frac{-1}{p}\right){\sigma mex}\left(p^{k-1}n + \frac{12\delta +i-p}{12p}\right)  \Mod {4}.
	\end{align}
	For a prime $p\geq 5$ and $p\not\equiv1\Mod{12}$, we know that $$\left(\frac{-1}{p}\right)=
	\begin{cases}
	1 & \text{if $p \equiv 5 \Mod {12}$,}\\
	-1 & \text{if $p \equiv 7, 11 \Mod {12}$.}\\
	\end{cases}$$ 	
	Hence  we conclude the proof of the theorem from \eqref{thm2.2.4.11}.
\end{proof}	
\begin{proof}[Proof of Corollary~\ref{cor2}] Let $ i \in \{5,7,11\}$. For a fixed $i$, we consider a prime $p\equiv i\Mod {12}$  and a non-negative integer $\delta$  such that  the prime $p$ divides $12\delta  + i$ as in Theorem~\ref{thm2.2_2}. Then by replacing $n$ with $np^{k-1}$ in \eqref{thm2.2.4.11} we have
		\begin{align} \label{thm2.2.4.11.1.1}
	{\sigma mex}\left(p^{2k} n+ p\delta + \frac{pi-1}{12}\right) \equiv(-1)\left(\frac{-1}{p}\right){\sigma mex}\left(p^{2(k-1)}n + \frac{12\delta +i-p}{12p}\right)  \Mod {4}.
	\end{align}
If we consider those cases when $12\delta+i=p^{2k-1}$ for $k\geq1$, then from \eqref{thm2.2.4.11.1.1} we obtain
	\begin{align*}
	{\sigma mex}\left(\frac{p^{2k}(12n+1)-1}{12}\right) \equiv (-1)\left(\frac{-1}{p}\right) {\sigma mex}\left(\frac{p^{2(k-1)}(12n+1)-1}{12}\right) \Mod 4.
	\end{align*}
By using the above recursive relation $k-1$~times, we obtain 	
	\begin{align*}
	{\sigma mex}\left(\frac{p^{2k}(12n+1)-1}{12}\right) \equiv \left((-1)\left(\frac{-1}{p}\right)\right)^k {\sigma mex}(n) \Mod 4.
	\end{align*}		
	Then the result directly follows from the above congruence.
\end{proof}

\section{Proof of Theorem~\ref{theorem222} and \ref{theorem111}} 
 To prove Theorem ~\ref{theorem222}--\ref{theorem111}  we need the following lemmas. We prove Lemma \ref{lemma222}--\ref{lemma111} inspired by  Kolberg~\cite{Kolberg}.
 \begin{lemma}\label{lemma1}
 	For any positive integer $n$, we have 
 	\begin{align*}
 \sum_{k=0}^{\infty}\sigma moex\left(n-\frac{k(3k-1)}{2}\right) +\sum_{k=1}^{\infty}\sigma moex\left(n-\frac{k(3k+1)}{2}\right) \equiv 0 \Mod{2}.
 	\end{align*}
 \end{lemma}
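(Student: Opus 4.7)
The plan is to multiply the generating function
\begin{align*}
\sum_{n=0}^\infty \sigma moex(n)\,q^n = (-q;q)_\infty (-q;q^2)_\infty^2
\end{align*}
by Euler's function $(q;q)_\infty$ and invoke Euler's pentagonal number theorem
\begin{align*}
(q;q)_\infty = 1 + \sum_{k=1}^\infty (-1)^k\left(q^{k(3k-1)/2} + q^{k(3k+1)/2}\right).
\end{align*}
The $q^n$-coefficient of the Cauchy convolution is
\begin{align*}
\sigma moex(n) + \sum_{k=1}^\infty (-1)^k\left[\sigma moex\!\left(n-\tfrac{k(3k-1)}{2}\right) + \sigma moex\!\left(n-\tfrac{k(3k+1)}{2}\right)\right],
\end{align*}
which, after absorbing $(-1)^k \equiv 1 \Mod 2$, is exactly the sum appearing in the lemma (the $k=0$ term contributing $\sigma moex(n)$, with the usual convention $\sigma moex(m)=0$ for $m<0$, so each sum is in fact finite). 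Thus it suffices to prove the identity $(q;q)_\infty \sum_{n \geq 0}\sigma moex(n)\, q^n \equiv 1 \Mod 2$, for then every $q^n$-coefficient with $n \geq 1$ vanishes modulo $2$.

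To establish this identity, I will first reduce the generating function modulo $2$ by replacing each $1+q^m$ with $1-q^m$, which is valid since $-1 \equiv 1 \Mod 2$. This gives
\begin{align*}
\sum_{n=0}^\infty \sigma moex(n)\,q^n \equiv (q;q)_\infty (q;q^2)_\infty^2 \Mod 2.
\end{align*}
Multiplying by $(q;q)_\infty$ yields $(q;q)_\infty^2 (q;q^2)_\infty^2 = \bigl[(q;q)_\infty (q;q^2)_\infty\bigr]^2$. Using the standard factorization $(q;q^2)_\infty = (q;q)_\infty / (q^2;q^2)_\infty$, the bracketed factor becomes $(q;q)_\infty^2 / (q^2;q^2)_\infty$, and the Frobenius congruence $(q;q)_\infty^2 \equiv (q^2;q^2)_\infty \Mod 2$ collapses this ratio to $1$; squaring preserves it.

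The whole argument reduces to the auxiliary congruence $(q;q)_\infty (q;q^2)_\infty \equiv 1 \Mod 2$ together with the pentagonal expansion. The only conceptual step is recognizing that multiplication by $(q;q)_\infty$ is the correct move, since this is precisely what produces the Kolberg-type alternating pentagonal sum on the left-hand side. I do not anticipate any serious obstacle beyond careful sign bookkeeping when passing from the alternating series to its mod $2$ reduction and aligning the $k=0$ term with the isolated $\sigma moex(n)$ summand from the pentagonal series.
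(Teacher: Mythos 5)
Your proposal is correct and is essentially the paper's own argument: the paper establishes $\sum_{n\ge 0}\sigma moex(n)q^n \equiv 1/(q;q)_\infty \Mod{2}$ via the Frobenius congruence $(q;q)_\infty^2\equiv(q^2;q^2)_\infty \Mod 2$ and then multiplies by the pentagonal-number expansion of $(q;q)_\infty$, which is exactly your step of showing $(q;q)_\infty\sum_{n\ge 0}\sigma moex(n)q^n\equiv 1 \Mod 2$ and extracting coefficients. The only cosmetic difference is that you package the reduction as $\bigl[(q;q)_\infty(q;q^2)_\infty\bigr]^2\equiv 1 \Mod 2$, while the paper invokes the same binomial congruence directly.
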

 	\begin{proof}
   For $|ab|<1$, Ramanujan's general theta function $f(a, b)$ is defined as
 	\begin{align*}
 	f(a, b)=\sum_{n=-\infty}^{\infty}a^{n(n+1)/2}b^{n(n-1)/2}.
 	\end{align*}
 	In Ramanujan's notation, the Jacobi triple product identity \cite[Entry 19, p. 36]{Berndt1991} takes the shape 
 	\begin{align*}
 	\index{$f(a,b)$}	f(a,b)=(-a;ab)_{\infty}(-b;ab)_{\infty}(ab;ab)_{\infty}.
 	\end{align*}
One of the most important special cases of $f(a,b)$ is
 	\begin{align*}
f(-q, -q^2)=(q;q)_{\infty}=\sum_{n=-\infty}^{\infty}(-1)^nq^{n(3n-1)/2}.
 	\end{align*}
 	Therefore, we have
 	\begin{align}\label{2.1}
 	(q;q)_{\infty}\equiv \sum_{n=0}^{\infty}q^{n(3n-1)/2}+\sum_{n=1}^{\infty}q^{n(3n+1)/2} \Mod {2}.
 	\end{align}
 	From \eqref{1.2},  the generating function for $\sigma moex(n)$ is 
 	\begin{align*}
 	\sum_{n=0}^{\infty}\sigma moex(n) q^n= (-q;q)_{\infty}(-q;q^2)_{\infty}^2.
 	\end{align*}
 	Using the binomial theorem and \eqref{2.1}, we have 
 	\begin{align*}
 	\sum_{n=0}^{\infty}\sigma moex(n) q^n \equiv  \frac{1}{(q;q)_{\infty}} \equiv \frac{1}{ \displaystyle \sum_{n=0}^{\infty}q^{n(3n-1)/2}+\sum_{n=1}^{\infty}q^{n(3n+1)/2}} \Mod{2}.
 	\end{align*}
 	Therefore,  
 	\begin{align*}
 	\sum_{n=0}^{\infty}\left(\sum_{k=0}^{\infty}\sigma moex\left(n-\frac{k(3k-1)}{2}\right) +\sum_{k=1}^{\infty}\sigma moex\left(n-\frac{k(3k+1)}{2}\right)\right)q^n \equiv 1 \Mod{2}.
 	\end{align*}
 	Hence, for any positive integer $n$, comparing the coefficients of $q^n$ on both sides of the above congruence, we get the result.
 	\end{proof}
 \begin{lemma}\label{lemma222}
 	For every positive integer $\ell\geq 2$, there exists an integer $n\in \left[\ell, \frac{\ell(3\ell+1)}{2}\right]$ such that $\sigma moex(n)$ is an even integer. In particular, there are infinitely many integers $n$ for which  $\sigma moex(n)$ is an even integer.
 \end{lemma}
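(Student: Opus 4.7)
The plan is to apply Lemma~\ref{lemma1} at the endpoint $n = N := \frac{\ell(3\ell+1)}{2}$, which has the distinguishing feature that it is itself a generalized pentagonal number (namely $\frac{k(3k+1)}{2}$ at $k=\ell$). The first step is to enumerate the generalized pentagonal numbers lying in $[0,N]$: they are $\tfrac{k(3k-1)}{2}$ for $0 \leq k \leq \ell$ together with $\tfrac{k(3k+1)}{2}$ for $1 \leq k \leq \ell$, giving $2\ell + 1$ values in total. The one combinatorial point that really needs checking is that the immediate predecessor of $N$ in this list is $\tfrac{\ell(3\ell-1)}{2} = N - \ell$; equivalently, every generalized pentagonal $p<N$ already satisfies $p \leq N - \ell$.

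Once this is verified, substituting $n=N$ into Lemma~\ref{lemma1} separates the sum into the single contribution from $p=N$, which is $\sigma moex(0) = 1$ (read off as the constant term of the generating function~\eqref{1.2}), and $2\ell$ remaining contributions of the form $\sigma moex(N-p)$ with $p<N$; by the observation above, each such argument lies in $[\ell, N]$. Lemma~\ref{lemma1} then forces
\[
1 + \sum_{p<N} \sigma moex(N-p) \equiv 0 \Mod{2},
\]
where the sum runs over the $2\ell$ generalized pentagonal $p$ with $p<N$. So the $2\ell$ summands add up to $1$ modulo $2$; since $2\ell$ is even while the total parity is odd, they cannot all be odd, and hence at least one $n \in [\ell, \frac{\ell(3\ell+1)}{2}]$ carries an even value of $\sigma moex(n)$. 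For the ``in particular'' clause, as $\ell \to \infty$ the lower endpoint of the interval $[\ell, \frac{\ell(3\ell+1)}{2}]$ tends to infinity, so the set of integers $n$ produced in this way is infinite.

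The only real obstacle is spotting the correct choice $n = N$: evaluating Lemma~\ref{lemma1} exactly at a generalized pentagonal number is what isolates the known anchor $\sigma moex(0) = 1$ on one side of the congruence and simultaneously guarantees all other arguments $N-p$ stay inside $[\ell, N]$. A non-pentagonal endpoint would either suppress the anchor term or allow some arguments to drop below $\ell$, and the parity count would no longer localize an even value inside the target interval.
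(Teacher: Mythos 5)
Your proposal is correct and follows essentially the same route as the paper: you evaluate Lemma~\ref{lemma1} at $n=\frac{\ell(3\ell+1)}{2}$, isolate the anchor term $\sigma moex(0)=1$, check that the remaining $2\ell$ arguments lie in $\left[\ell,\frac{\ell(3\ell+1)}{2}\right]$, and conclude by a parity count. The only cosmetic difference is that you phrase the final step directly (an even number of summands cannot all be odd if their sum is odd) where the paper frames it as a proof by contradiction, and you justify the containment of the arguments via the ordering of generalized pentagonal numbers rather than the paper's explicit estimate of $\mathcal{D}(\ell-m)$; these are equivalent.
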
	
\begin{proof} We use proof by contradiction. For every positive integer $\ell\geq 2$, we assume that $\sigma moex(n)$ is odd  when $n\in \left[\ell, \frac{\ell(3\ell+1)}{2}\right]$. We define $\mathcal{C}(k):= \frac{\ell(3\ell+1)}{2}-\frac{k(3k-1)}{2}$  and $\mathcal{D}(k):= \frac{\ell(3\ell+1)}{2}-\frac{k(3k+1)}{2}$ for a positive integer $k$.  From Lemma~\ref{lemma1}, we have 
	\begin{align}\label{2.4}
	\sum_{k=0}^{\infty}\sigma moex\left(\mathcal{C}(k)\right) +\sum_{k=1}^{\infty}\sigma moex\left(\mathcal{D}(k)\right) \equiv 0 \Mod{2}.
	\end{align} 	
It is easy to check that $\mathcal{C}(k)$ and $\mathcal{D}(k)$ are negative for all $k\geq \ell+1$.  Since $\sigma moex(n)=0$ for all negative integers $n$, then from \eqref{2.4} we have
	\begin{align} \label{new2.2.1}
	&\sum_{k=0}^{\ell}\sigma moex\left(\mathcal{C}(k)\right) +\sum_{k=1}^{\ell}\sigma moex\left(\mathcal{D}(k)\right) \\
\notag	=&\sum_{k=0}^{\ell}\sigma moex\left(\mathcal{C}(k)\right) +\sum_{k=1}^{\ell-1}\sigma moex\left(\mathcal{D}(k)\right) + \sigma moex\left(\mathcal{D}(\ell)\right).
	\end{align}
From \eqref{1.2} we have 
\begin{align*}
\sum_{n=0}^{\infty}\sigma moex(n) q^n =1 + 3 q + 4 q^2 + 7 q^3 + 13 q^4 + 19 q^5 + 29 q^6+\cdots. 
\end{align*}
Therefore $\sigma moex\left(\mathcal{D}(\ell)\right)=\sigma moex\left(0\right)=1$. For $\ell\geq 2$, we have $\mathcal{D}(\ell-m)=\frac{1}{2}\big(6\ell m-3m^2+m\big).$ Then, for $m = 1, 2, \cdots, \ell-1$ we see that
\begin{align*}
   \mathcal{D}(\ell-m)\geq\frac{1}{2}\big(6\ell -3\ell+4\big)\geq \ell.
\end{align*}
So, $\mathcal{D}(k) \in \left[\ell, \frac{\ell(3\ell+1)}{2}\right]$ for $k \in \{1,2, \dots, \ell-1\}$. In a similar manner for $k \in \{0,1,2, \dots, \ell\}$ we can show that $\mathcal{C}(k) \in \left[\ell, \frac{\ell(3\ell+1)}{2}\right]$. 
By our assumption 
	$\displaystyle\sum_{k=0}^{\ell}\sigma moex\left(\mathcal{C}(k)\right)+\displaystyle\sum_{k=1}^{\ell-1}\sigma moex\left(\mathcal{D}(k)\right)$ is even since  it is a sum of  $(\ell+1)+(\ell-1)=2\ell$ odd numbers. Consequently, the summation  \eqref{new2.2.1} is  odd for $\ell\geq 2$, which is a contradiction to the fact \eqref{2.4}. This concludes our result.
\end{proof}
\begin{proof}[Proof of Theorem~\ref{theorem222}] Suppose $n$ is a positive number. Now we count the number of elements of the set  $$\left\{1\leq n \leq X: \sigma moex(n)~ \text{is an even integer} \right\}.$$
We consider $a_1=2$, $a_{k}= \dfrac{a_{k-1}(3a_{k-1}+1)}{2}$ and a partition of the interval $[1,X]$ as follows
$$[1,X]=[1,a_1)\cup[a_1,a_2)\cup\dots\cup[a_{k-1},a_{k})\cup \dots \cup[a_{\nu},X],$$ 
where $\nu$  is the largest integer such that $a_{\nu}\leq X.$ By Lemma \ref{lemma222} we can find a positive number $n$ in $\left[a_{k-1}, a_{k}\right]$ such that $\sigma moex(n)$ is an even integer. Then the number of $n\leq X$  for which $\sigma moex(n)$ is even  is at
least $\lfloor\nu/2\rfloor$. It remains to find the value of $\nu$ as a function of $X$. For all $k\geq 0$, we have
\begin{align*}
 a_{k} = \frac{a_{k-1}(3a_{k-1}+1)}{2} \leq 2 a_{k-1}^2 \leq 2^{2^{k-1}-1} a_1^{2^{k-1}}\leq 2^{2^k}.
\end{align*}
Since $a_{\nu}\leq X< a_{\nu+1}$,  we see that  $\nu\geq \alpha \log\log X$ for some  constant  $ \alpha>0 $.
\end{proof}
 \begin{lemma}\label{lemma111}
 For every positive integer $\ell\geq2$, there exists an integer $n\in \left[2\ell-1, \frac{\ell(3\ell-1)}{2}\right]$ such that $\sigma moex(n)$ is an odd integer. Consequently, there are infinitely many integers $n$ for which  $\sigma moex(n)$ is an odd integer.
 \end{lemma}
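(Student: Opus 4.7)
The plan is to mirror the proof of Lemma~\ref{lemma222} almost verbatim, but using the \emph{other} generalized pentagonal number $\frac{\ell(3\ell-1)}{2}$ in place of $\frac{\ell(3\ell+1)}{2}$. Assume, for contradiction, that $\sigma moex(n)$ is even for every $n \in \left[2\ell-1,\frac{\ell(3\ell-1)}{2}\right]$. Apply Lemma~\ref{lemma1} at $n=\frac{\ell(3\ell-1)}{2}$ and set
\[
\mathcal{C}(k):=\frac{\ell(3\ell-1)}{2}-\frac{k(3k-1)}{2},\qquad \mathcal{D}(k):=\frac{\ell(3\ell-1)}{2}-\frac{k(3k+1)}{2}.
\]
The key asymmetry with the previous lemma is that here the \emph{first-kind} pentagonal index $k=\ell$ hits zero: $\mathcal{C}(\ell)=0$, which contributes $\sigma moex(0)=1$. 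Meanwhile $\mathcal{D}(\ell)=-\ell<0$, and both $\mathcal{C}(k),\mathcal{D}(k)$ remain negative for $k\geq \ell+1$ (by monotonicity of $k(3k\pm1)$ in $k$), so those terms contribute $0$.

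Next I would verify that the remaining arguments fall inside the assumed interval. A direct substitution $k=\ell-m$ gives $\mathcal{C}(\ell-m)=\frac{m(6\ell-3m-1)}{2}$ and $\mathcal{D}(\ell-m)=\frac{6\ell m-2\ell-3m^{2}+m}{2}$. The boundary values $m=1$ produce $\mathcal{C}(\ell-1)=3\ell-2\geq 2\ell-1$ and $\mathcal{D}(\ell-1)=2\ell-1$, exactly the lower endpoint; the maxima $\mathcal{C}(0)=\frac{\ell(3\ell-1)}{2}$ and $\mathcal{D}(1)=\frac{3\ell^{2}-\ell-2}{2}$ lie at or below the upper endpoint. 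Hence $\mathcal{C}(k)\in \left[2\ell-1,\frac{\ell(3\ell-1)}{2}\right]$ for $k=0,1,\dots,\ell-1$, and the same holds for $\mathcal{D}(k)$ with $k=1,2,\dots,\ell-1$.

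Finally, I would count. By Lemma~\ref{lemma1},
\[
\sum_{k=0}^{\ell-1}\sigma moex(\mathcal{C}(k))+\sum_{k=1}^{\ell-1}\sigma moex(\mathcal{D}(k))+\sigma moex(0)\equiv 0\Mod 2.
\]
Under the contradiction hypothesis the first two sums together consist of $\ell+(\ell-1)=2\ell-1$ even numbers, while $\sigma moex(0)=1$ is odd; the left side is therefore odd, contradicting Lemma~\ref{lemma1}. The infinitude statement then follows because the intervals $\left[2\ell-1,\frac{\ell(3\ell-1)}{2}\right]$ can be taken disjoint for a suitable subsequence of $\ell$. The only mildly delicate step is the range verification, particularly the identity $\mathcal{D}(\ell-1)=2\ell-1$ which pins down the lower endpoint and dictates why the interval must begin at $2\ell-1$ rather than at $\ell$ as in Lemma~\ref{lemma222}.
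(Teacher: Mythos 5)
Your proof is correct and follows essentially the same route as the paper's: the same recurrence from Lemma~\ref{lemma1} evaluated at $n=\frac{\ell(3\ell-1)}{2}$, the same observation that the $k=\ell$ term of the first-kind pentagonal sum contributes $\sigma moex(0)=1$ while everything else either vanishes or lies in $\left[2\ell-1,\frac{\ell(3\ell-1)}{2}\right]$, and the same parity count of $2\ell-1$ even terms plus one odd term. The only blemish is a trivial arithmetic slip in $\mathcal{D}(1)$, which equals $\frac{3\ell^{2}-\ell-4}{2}$ rather than $\frac{3\ell^{2}-\ell-2}{2}$; this does not affect the range verification or the conclusion.
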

\begin{proof} We use the method of contradiction to prove the result.  For every positive integer $\ell\geq 2$, assume that $\sigma moex(n)$ is an even number when $n\in \left[2\ell-1, \frac{\ell(3\ell-1)}{2}\right]$. For a positive integer $k$, let us consider $\mathcal{A}(k):= \frac{\ell(3\ell-1)}{2}-\frac{k(3k-1)}{2}$  and $\mathcal{B}(k):= \frac{\ell(3\ell-1)}{2}-\frac{k(3k+1)}{2}$.  From Lemma~\ref{lemma1} we have 
	\begin{align}\label{2.2}
	\sum_{k=0}^{\infty}\sigma moex\left(\mathcal{A}(k)\right) +\sum_{k=1}^{\infty}\sigma moex\left(\mathcal{B}(k)\right) \equiv 0 \Mod{2}.
	\end{align} 	
It is easy to check that, for all $k\geq \ell+1$ the value of $\mathcal{A}(k)<0$, and  $\mathcal{B}(k)<0$ for all $k\geq \ell$. Since $\sigma moex(n)=0$ for all negative integers $n$, it follows from \eqref{2.2} that
	\begin{align} \label{2.3}
	\notag	&\sum_{k=0}^{\ell}\sigma moex\left(\mathcal{A}(k)\right) +\sum_{k=1}^{\ell-1}\sigma moex\left(\mathcal{B}(k)\right) \\
	\notag=&\sum_{k=0}^{\ell-1}\sigma moex\left(\mathcal{A}(k)\right) +\sum_{k=1}^{\ell-1}\sigma moex\left(\mathcal{B}(k)\right) + \sigma moex\left(\mathcal{A}(\ell)\right) \\
	=&\sum_{k=0}^{\ell-1}\sigma moex\left(\mathcal{A}(k)\right) +\sum_{k=1}^{\ell-1}\sigma moex\left(\mathcal{B}(k)\right) +1.
	\end{align}
Note that, for a fixed $k$, $\mathcal{A}(k)$ and $\mathcal{B}(k)$ are decreasing functions of $k$. Then for every positive integer $\ell\geq2$, we check that  $\mathcal{A}(0),~\mathcal{A}(\ell-1), ~\mathcal{B}(1)~\text{and}~\mathcal{B}(\ell-1) \in \left[2\ell-1, \frac{\ell(3\ell-1)}{2}\right].$
Thus, for $\ell\geq2$ and $k\in \{1,2, \dots, \ell-1\}$ the values of  $\mathcal{A}(k),~\mathcal{B}(k)$, and $\mathcal{A}(0)$ lies in the interval $\left[2\ell-1, \frac{\ell(3\ell-1)}{2}\right]$. By our assumption
	$\displaystyle\sum_{k=0}^{\ell-1}\sigma moex\left(\mathcal{A}(k)\right)$ and $\displaystyle\sum_{k=1}^{\ell-1}\sigma moex\left(\mathcal{B}(k)\right) $ are even numbers. Therefore the summation \eqref{2.3} 
	is odd for $\ell\geq 2$, which is a contradiction to the fact \eqref{2.2}. This concludes our result.
\end{proof}

\begin{proof}[Proof of Theorem ~\ref{theorem111}] Suppose $n$ is a positive number. Now we count the number of elements of the set  $$\left\{1\leq n \leq X: \sigma moex(n)~ \text{is an odd integer} \right\}.$$
Consider $a_1=5$ and  $a_{k}= \dfrac{a_{k-1}(3a_{k-1}-1)}{2}$ for $k\geq 2$. Let us take a partition the interval $[1,X]$ as follows
$$[1,X]=[1,5)\cup[5,a_2)\cup\dots\cup[a_{k-1},a_{k})\cup \dots \cup[a_{\nu},X],$$ 
where $\nu$  is the largest integer such that $a_{\nu}\leq X.$ From Lemma \ref{lemma111},  we can find a positive number $n$ in $\left[a_{k-1}, a_{k}\right]$ such that $\sigma moex(n)$ is an odd integer. Then the number of $n\leq X$  for which $\sigma moex(n)$ is odd  is at
least $\lfloor\nu/2\rfloor$, and it remains to find the value of $\nu$ as a function of $X$. Now for all $k\geq 0$, we get,
\begin{align*}
 a_{k} = \frac{a_{k-1}(3a_{k-1} -1)}{2} \leq \frac{3}{2} a_{k-1}^2 \leq\left( \frac{3}{2}\right)^{2^{k-1}-1}a_1^{2^{k-1}}=\left( \frac{3}{2}\right)^{2^{k-1}-1}5^{2^{k-1}}\leq5^{2^k}
\end{align*}
and hence we get the result immediately.
\end{proof}

\subsection*{Acknowledgements} 
The author is  grateful to Prof.\ Bruce C.\ Berndt  for his valuable suggestions.  The author would like to thank the National Board for Higher Mathematics (Department of Atomic Energy, India) for the post-doctoral fellowship and the Indian Statistical Institute Delhi for providing the research facilities. The author would also like to thank the anonymous referees for the careful review and comments.
\bibliographystyle{siam}
\bibliography{mex}
\end{document}